\newtheorem{theorem}{Theorem}[section]
\newtheorem{proposition}[theorem]{Proposition}
\newtheorem{lemma}[theorem]{Lemma}
\newtheorem{corollary}[theorem]{Corollary}
\theoremstyle{definition}
\newtheorem{example}[theorem]{Example}
\newtheorem{remark}[theorem]{Remark}
\newcommand{\defn}[1]{\textbf{#1}} 
\newcommand{\cvec}[1]{\begin{pmatrix}#1\end{pmatrix}}
\newcommand{\notimplies}{%
  \mathrel{\ooalign{\hidewidth$\not\phantom{=}$\hidewidth\cr$\implies$}}}
\newcommand{\notdiv}{%
  \mathrel{\ooalign{\hidewidth$\not\phantom{a}$\hidewidth\cr$\mid$}}}
\newcommand{\NN}{\mathbb{N}}
\newcommand{\ZZ}{\mathbb{Z}}
\newcommand{\RR}{\mathbb{R}}
\newcommand{\kk}{\Bbbk}
\newcommand{\pp}{\mathfrak{p}}
\newcommand{\set}[1]{\{#1\}}
\DeclareMathOperator{\Sym}{Sym}
\DeclareMathOperator{\Ann}{Ann}
\DeclareMathOperator{\Ext}{Ext}
\DeclareMathOperator{\Vol}{Vol}
\DeclareMathOperator{\cone}{cone}
\newcommand{\Rt}{\widetilde{R}}
\newcommand{\Rone}{\kk[R_1]}
\newcommand{\Pt}{\widetilde{P}}
\newcommand{\hs}{h^*}
\newcommand{\ths}{\widetilde{h}^*}
\newcommand{\MP}{M(P)}
\newcommand{\MPh}{\widehat{M}(P)}
\author{Lukas Katth\"an}
\address{Heinrich-Warnecke-Stra\ss{}e 6a, 37081 G\"ottingen, Germany}
\email{katthaen@math.uni-frankfurt.de}
\author{Kohji Yanagawa}
\address{Department of Mathematics, Kansai University, Suita, Osaka 564-8680, Japan}
\email{yanagawa@kansai-u.ac.jp}
\thanks{This work was partially supported by JSPS Grant-in-Aid for Scientific Research (C) 19K03456.}
\keywords{Lattice polytope, $h^\ast$-vector, semi-standard graded ring, Cohen-Macaulay domain}
\subjclass[2010]{Primary: 13H10, 52B20; Secondary: 05E40.}
\date{\today}
\title[Graded CM domains and lattice polytopes]{Graded Cohen-Macaulay domains and lattice polytopes with short $h$-vector}
\begin{document} 
\begin{abstract}
	Let $P$ be a lattice polytope with $h^*$-vector $(1, h^*_1, h^*_2)$.
	In this note we show that if $h_2^* \leq h_1^*$, then $P$ is IDP.  
	More generally, we show the corresponding statements for semi-standard graded Cohen-Macaulay domains over algebraically closed fields.
\end{abstract}

\maketitle

\section{Introduction}
Let  $R=\bigoplus_{i \in \NN} R_i$ be a noetherian graded commutative ring. 
Throughout the paper, we assume that $\kk:= R_0$ is an algebraically closed field. 
If $R =\kk[R_1]$, that is, $R$ is generated by $R_1$ as a $\kk$-algebra, we say $R$ is \defn{standard graded}.
If $R$ is finitely generated as a $\kk[R_1]$-module, we say $R$ is \defn{semi-standard graded}.

If $R$ is a semi-standard graded ring of Krull dimension $d$, its Hilbert series is of the form  
\[
	\sum_{i \in \NN} (\dim_\kk R_i) t^i =\frac{h_0+h_1 t + \cdots +h_st^s}{(1-t)^d}
\]
for some integers $h_0, h_1, \ldots, h_s$ with $\sum_{i=0}^s h_i \ne 0$ and $h_s \ne 0$.  
We call the vector $(h_0,h_1,\ldots, h_s)$ the \defn{$h$-vector} of $R$. 
We always have $h_0=1$ and $\deg R=\sum_{i=0}^s h_i$. If $R$ is Cohen--Macaulay, then $h_i \ge 0$ for all $i$. 
The following is our main result:

\begin{theorem}\label{thm:main}
	Let $R$ be a semi-standard graded Cohen-Macaulay domain (with $R_0 = \kk=\overline{\kk}$) and $h$-vector $(h_0, h_1, h_2)$.  If $h_2 \leq h_1$, then $R$ is standard graded.
\end{theorem}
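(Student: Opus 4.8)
The plan is to reduce everything to a zero-dimensional statement about points and then invoke the classical theory of points in linearly general position. Since $R$ is semi-standard, $R/(R_1)R$ is a finite $\kk$-module, so $\sqrt{(R_1)R}=R_+$, and as $\kk$ is infinite a general tuple of linear forms $\theta_1,\dots,\theta_d\in R_1$ is a homogeneous system of parameters; because $R$ is Cohen--Macaulay it is a regular sequence. I would set $B:=R/(\theta_1,\dots,\theta_{d-1})$ and $A:=R/(\theta_1,\dots,\theta_d)=B/\theta_d B$. Multiplying the Hilbert series by $(1-t)^{d-1}$, respectively $(1-t)^d$, shows that $B$ is a one-dimensional Cohen--Macaulay ring with $\dim_\kk B_0,B_1,B_2$ equal to $1,\,1+h_1,\,1+h_1+h_2$, and that $A$ is Artinian with $h$-vector $(1,h_1,h_2)$. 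A graded Nakayama argument (reduction modulo the regular sequence) gives that $R$ is standard graded iff $A$ is generated in degree $1$; since $A_{\ge 3}=0$ this holds iff $A_1A_1=A_2$, equivalently, since $\theta_d B_1\subseteq B_1B_1$, iff $B_1B_1=B_2$. Thus everything reduces to the single multiplication statement $B_1B_1=B_2$.

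Next I would bring in geometry. Because $R$ is a domain, $X:=\Proj R$ is an integral projective variety of dimension $d-1$, and for general $\theta_i$ the scheme $\Gamma:=\Proj B$ is the linear section of $X$ by the codimension-$(d-1)$ subspace $\{\theta_1=\dots=\theta_{d-1}=0\}\cong\mathbb P^{h_1}$ (note $\dim_\kk B_1=h_1+1$). As a general linear section of an integral variety over $\overline\kk$, $\Gamma$ is a set of $N:=1+h_1+h_2$ reduced points spanning $\mathbb P^{h_1}$ and lying in \emph{linearly general position}. Since $B$ is one-dimensional Cohen--Macaulay it has positive depth, so $H^0_{\mathfrak m}(B)=0$ and $B\hookrightarrow\bigoplus_i H^0(\mathcal O_\Gamma(i))$; as $\dim_\kk B_2=N=\#\Gamma$, evaluation identifies $B_2$ with $\kk^\Gamma$ and identifies $B_1B_1$ with the image of the degree-two forms on $\mathbb P^{h_1}$ in $\kk^\Gamma$. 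Hence $B_1B_1=B_2$ is precisely the statement that $\Gamma$ imposes independent conditions on quadrics.

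Finally the hypothesis enters purely numerically: $h_2\le h_1$ is equivalent to $N\le 2h_1+1$. I would then invoke the classical lemma that any $N\le 2r+1$ points in linearly general position in $\mathbb P^r$ impose independent conditions on quadrics --- for each point one takes two hyperplanes through a partition of the remaining $\le 2r$ points into two sets of size $\le r$, arranged to miss the chosen point, and their product is a quadric separating it. With $r=h_1$ this yields $B_1B_1=B_2$, and therefore that $R$ is standard graded. The step I expect to be the main obstacle is justifying that the general section $\Gamma$ really lies in linearly general position: this is the General Position Lemma, transparent in characteristic $0$ but requiring care over $\overline\kk$ of positive characteristic (one may instead need a direct argument on the integral curve $\Proj\bigl(R/(\theta_1,\dots,\theta_{d-2})\bigr)$). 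This is exactly the place where both the domain hypothesis (to make the section a genuine irreducible variety) and the algebraically closed hypothesis (to make $\Gamma$ a union of reduced $\kk$-points with $H^0(\mathcal O_\Gamma(i))=\kk^{N}$) are essential.
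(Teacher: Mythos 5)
Your overall strategy --- Artinian reduction to the single statement $B_1B_1=B_2$ for a one-dimensional linear section, followed by Castelnuovo's classical argument that at most $2r+1$ points in linearly general position in $\mathbb{P}^{r}$ impose independent conditions on quadrics --- is a genuinely different route from the paper, which instead applies the Eisenbud--Koh linear syzygy vanishing theorem to the canonical module $\omega_R$ together with Betti number duality. Your reduction itself (general linear forms are a regular sequence since $R$ is Cohen--Macaulay and $\kk$ is infinite; graded Nakayama; $\theta_d B_1\subseteq B_1B_1$) is correct, and the final count is exactly where $h_2\le h_1$ should enter. However, there is a genuine gap at the central geometric step, and it sits precisely where the real difficulty of the theorem lives. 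You assert that $\Gamma$ is ``a general linear section of an integral variety'' and hence consists of $N$ reduced points of $\mathbb{P}^{h_1}$ in linearly general position. But $R$ is only \emph{semi}-standard graded: $R_1$ does not embed $X=\Proj R$ into $\mathbb{P}^{n-1}$; it only defines a finite morphism $\varphi\colon X\to\mathbb{P}^{n-1}$ onto $Y:=\Proj\kk[R_1]$, and $\varphi$ being close to an embedding is essentially what one is trying to prove. Consequently $\Gamma=\Proj B=\varphi^{-1}(L)$ need not inject into $L\cong\mathbb{P}^{h_1}$: if $\varphi$ had degree $e\ge 2$ onto $Y$, the points of $\Gamma$ would collide in $\mathbb{P}^{h_1}$, no quadric could separate two points with the same image, and $B_1B_1=B_2$ would genuinely fail; moreover an inseparable $\varphi$ would make $\Gamma$ non-reduced in positive characteristic. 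So ``linearly general position of $\Gamma$'' is not even well-posed before $\varphi$ is controlled.

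The gap is fixable, but the fix is a real additional step in which the hypothesis is used a second time: $Y$ is integral and nondegenerate of codimension $h_1$ in $\mathbb{P}^{n-1}$, so $\deg Y\ge h_1+1$, while $\deg\varphi\cdot\deg Y=\deg R=1+h_1+h_2\le 1+2h_1<2(h_1+1)$; hence $\varphi$ is birational, and a general codimension-$(d-1)$ linear subspace avoids the (at most $(d-2)$-dimensional) locus where $\varphi$ fails to be an isomorphism, so that $\Gamma\cong Y\cap L$ after all. Only after this does your argument reduce to the General Position Lemma for the embedded integral variety $Y$ --- which, as you rightly flag, requires Rathmann's theorem in positive characteristic. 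In effect, the Eisenbud--Koh vanishing theorem that the paper invokes packages exactly these two difficulties (working with torsion-free modules rather than embedded varieties, and working over an arbitrary algebraically closed field) into a purely algebraic statement; this is why the paper can run the argument on $\omega_R$ without ever discussing $\varphi$, and why it obtains the stronger vanishing $\beta^S_{p,p+s}(R)=0$ for all $p\le h_1-h_s$ rather than only the generation statement.
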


In fact, we will prove a more general version, see Theorem~\ref{thm:main_mg} below.

An important class of semi-standard graded Cohen-Macaulay domains are the Ehrhart rings of lattice polytopes, which we now recall.
Let $P \subset \RR^d$ be a lattice polytope.
Its \defn{Ehrhart ring} $\kk[P]$ is the monoid algebra of the monoid of lattice points in the cone $C = \cone(\set{1} \times P ) \subset \RR^{d+1}$  over $P$.
The additional coordinate in the construction of $C$ yields a natural grading on $\kk[P]$, such that $\kk[P]$ is semi-standard graded, and 
its Hilbert series is the Ehrhart series of $P$.
In particular, the $h$-vector of $\kk[P]$ is the $h^*$-vector of $P$. Hence the Krull dimension $\dim \kk[P]$ equals $\dim P+1$. 

It is well-known that $\kk[P]$ is a normal domain,
and by Hochster's Theorem \cite[Theorem 1]{hochster}, it is Cohen-Macaulay.
We refer to the reader to the monograph by Bruns and Gubeladze \cite{BG} for more information on Ehrhart rings.
The index of the last non-zero entry of the $h^*$-vector is called the \defn{degree} of $P$.  We always have $\deg (P) \leq \dim (P)$. The $h^*$-vector of $P$ is sometimes denoted by $(h_0^*, h_1^*, \ldots, h^*_{\dim(P)})$, even if $\deg(P) < \dim (P)$. In this case, $h_i^*=0$ for all $i > \deg(P)$. 
We also remark that there is no direct relation between $\deg(P)$ and $\deg(\kk[P]) =\sum_{i=0}^d h_i^*$, the latter being the multiplicity of $\kk[P]$, which also equals the normalized volume $\Vol(P)$ of $P$.

A lattice polytope $P$ is called \defn{IDP} (an abbreviation for ``integer decomposition property'') if for every $k \in \NN$ and every lattice point $p \in kP \cap \ZZ^d$, there exist $k$ lattice points $p_1, \dotsc, p_k \in P \cap \ZZ^d$ with $p = \sum_i p_i$.
Clearly, $P$ is IDP if and only if $\kk[P]$ is standard graded.
Hence we obtain the following combinatorial version of our main result (here we do not have to assume that $\kk$ is algebraically closed, since we can replace $\kk[P]$ by $\overline{\kk}[P] \cong \overline{\kk} \otimes_\kk \kk[P]$):
\begin{corollary}\label{cor:main}
	Let $P \subset \RR^d$ be a lattice polytope of degree $2$ with $h^*$-vector $(1, h^*_1, h^*_2)$.
If $h^*_2 \leq h^*_1$, then $P$ is IDP.
\end{corollary}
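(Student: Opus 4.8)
The plan is to read off the corollary from Theorem~\ref{thm:main} via the Ehrhart ring $\kk[P]$, using the dictionary assembled above. The property of being IDP is purely combinatorial---it refers only to the lattice points of $P$ and of its dilates $kP$---and in particular involves no coefficient field. Hence I am free to choose the field so as to meet the hypotheses of Theorem~\ref{thm:main}: fix an algebraically closed field $\kk = \overline{\kk}$ (say $\kk = \mathbb{C}$) and set $R := \kk[P]$.

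I would then check that $R$ meets every hypothesis of Theorem~\ref{thm:main}. As recalled in the introduction, $R$ is a (normal, hence integral) domain; it is semi-standard graded with $R_0 = \kk = \overline{\kk}$; and by Hochster's theorem \cite[Theorem 1]{hochster} it is Cohen--Macaulay. Its $h$-vector equals the $h^*$-vector of $P$, which, because $P$ has degree $2$, is exactly $(1, h^*_1, h^*_2)$ with $h^*_2 \neq 0$---precisely the length-three shape $(h_0, h_1, h_2)$ to which the theorem applies. The standing hypothesis $h^*_2 \le h^*_1$ is literally the inequality $h_2 \le h_1$.

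Applying Theorem~\ref{thm:main} then yields that $R = \kk[P]$ is standard graded. Since $P$ is IDP if and only if $\kk[P]$ is standard graded, I conclude that $P$ is IDP, which completes the argument. I expect no genuine obstacle at the level of the corollary: all of the substance is packaged into Theorem~\ref{thm:main}, and the only point demanding any care is the clean separation between the field-independent combinatorial notion of IDP and the algebraic input---it is this separation that lets me invoke the theorem over an algebraically closed field even though the corollary itself names no field at all.
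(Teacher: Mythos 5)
Your proposal is correct and follows exactly the paper's route: form the Ehrhart ring $\kk[P]$ over an algebraically closed field, note it is a semi-standard graded Cohen--Macaulay (normal) domain whose $h$-vector is the $h^*$-vector of $P$, apply Theorem~\ref{thm:main}, and translate ``standard graded'' back to ``IDP.'' Your observation that IDP is field-independent is the same point the paper makes when it remarks that one may pass to $\overline{\kk}[P] \cong \overline{\kk}\otimes_\kk \kk[P]$.
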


Note that if $P \subset \RR^2$ is a lattice polygon, then it has degree at most 2, and it always satisfies $h^*_2 \leq h^*_1$.
Therefore, this corollary can be seen as an extension of the well-known fact that lattice polygons are IDP. See also Remark~\ref{Rem 3.3} (1) below. 

We give an example to show that the bound in Corollary~\ref{cor:main} is sharp:

\begin{example}\label{eq:reeves}
	Let $P$ be the $3$-simplex with vertices 
	\[
	\cvec{0\\0\\0},
	\cvec{1\\0\\0},
	\cvec{0\\1\\0} \text{ and }
	\cvec{1\\1\\2}.
	\]
	It is a Reeves-simplex (cf.~\cite[Example 2.56(a)]{BG}) and its $h^*$-vector is $(1,0,1)$.
	It is not IDP, and hence the bound $\hs_2 \leq \hs_1$ is sharp.
\end{example}


\subsection*{Acknowledgements}
The authors thank Kazuma Shimomoto for many inspiring discussion throughout this project.

\section{Proofs of the main results}
As before, let  $R=\bigoplus_{i \in \NN} R_i$ be a noetherian graded commutative ring such that $\kk:= R_0$ is an algebraically closed field. 
We are going to regard $R$ as a module over $S := \Sym_\kk R_1$.
Note that $S$ is isomorphic to the polynomial ring $\kk[x_1, \ldots, x_n]$ with $n=\dim_\kk R_1$.
Moreover, $R$ is standard graded if and only if $R$ is a quotient ring of $S$, and $R$ is semi-standard graded if and only if $R$ is finitely generated as an $S$-module. 

For a finitely generated graded $S$-module $M$ and natural numbers $i,j \in \NN$, set 
$$\beta_{i,j}^S(M):= \dim_\kk[\operatorname{Tor}_i^S(\kk, M)]_j.$$ 
In particular, $\beta_{0,j}^S(M)$ is the number of $S$-module generators for $M$ in degree $j$.  

\subsection{A bound on the degrees of the generators}
Assume that $R$ is semi-standard graded, and has the $h$-vector $(h_0, h_1, \ldots, h_s)$. 
The goal of this section is to obtain a bound on the degrees of the generators of $R$ as an $S$-module.
If $R$ is Cohen--Macaulay, it is well-known that the generators have degree at most $s$. 

Our result is a sufficient criterion when this bound can be improved by one:
\begin{theorem}\label{thm:main_mg}
	Let $R$ be a semi-standard graded Cohen-Macaulay domain, $S := \Sym_\kk R_1$ and with $h$-vector $(1 = h_0, h_1, h_2, \dotsc, h_s)$.
	Then it holds that 
	\[ \beta^S_{p,p+s}(R) = 0 \text{ for } 0 \leq p \leq h_1 - h_s.\]
	In particular, if $h_s \leq h_1$, then $R$ is generated by elements of degree $\leq s-1$ as an $S$-module.
\end{theorem}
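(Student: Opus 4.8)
The plan is to study the minimal graded free resolution of $R$ over the polynomial ring $S = \Sym_\kk R_1 \cong \kk[x_1,\dots,x_n]$, where $n = \dim_\kk R_1 = d + h_1$ (reading off the coefficient of $t$ in the Hilbert series, with $d = \dim R$). Two structural facts drive everything. First, since $R$ is Cohen--Macaulay, $\operatorname{depth}_S R = \dim R = d$, so Auslander--Buchsbaum gives $\operatorname{pd}_S R = n-d = h_1$; thus $\beta^S_{p,j}(R)=0$ for $p>h_1$. Second, the $a$-invariant of $R$ is $a(R)=s-d$, and Cohen--Macaulayness gives $\operatorname{reg}_S R = a(R)+d = s$; hence $\beta^S_{p,j}(R)=0$ whenever $j-p>s$, so the numbers $\beta^S_{p,p+s}(R)$ occupy the top diagonal of the Betti table. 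Comparing the top-degree coefficient of the $K$-polynomial $\sum_p(-1)^p\sum_j\beta^S_{p,j}(R)\,t^j = h(t)(1-t)^{h_1}$ one finds $\beta^S_{h_1,h_1+s}(R)=h_s$, which is the ``extreme case'' of the claimed vanishing range $p\le h_1-h_s$.

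Next I would reduce to the Artinian case. Since $\kk$ is infinite and $R$ is Cohen--Macaulay of dimension $d$, a generic linear system of parameters $\ell_1,\dots,\ell_d\in R_1$ is a regular sequence on $R$; as each $\ell_i$ is also a nonzerodivisor on $S$, reducing the minimal resolution modulo $(\ell_1,\dots,\ell_d)$ stays minimal and yields $\beta^S_{p,j}(R)=\beta^{\bar S}_{p,j}(\bar R)$, where $\bar R = R/(\ell_1,\dots,\ell_d)R$ is a finite-length module over $\bar S = \kk[y_1,\dots,y_{h_1}]$ with Hilbert function $(1,h_1,\dots,h_s)$. Writing $V=\bar S_1=\bar R_1$ and computing $\Tor$ with the Koszul complex, the crucial simplification is $\bar R_{s+1}=0$: the Koszul differential out of $\bigwedge^pV\otimes\bar R_s$ vanishes, so
\[ \beta^{\bar S}_{p,p+s}(\bar R)=\dim_\kk\operatorname{coker}\Big(\bigwedge^{p+1}V\otimes\bar R_{s-1}\xrightarrow{\partial_{p+1}}\bigwedge^pV\otimes\bar R_s\Big). \]
Thus the theorem becomes the statement that this generalized Koszul (contraction) map is surjective for $0\le p\le h_1-h_s$; the case $p=0$ is exactly surjectivity of the multiplication $V\otimes\bar R_{s-1}\to\bar R_s$, i.e. the absence of degree-$s$ generators.

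Dually, surjectivity of $\partial_{p+1}$ for $p\le h_1-h_s$ is equivalent, via local duality, to a vanishing on the linear strand of the canonical module $\omega_{\bar R}=\bar R^\vee$, namely that $\omega_{\bar R}$ --- which has exactly $h_s$ minimal generators in its lowest degree --- has linear strand of length at most $h_s-1$. I expect this to be the main obstacle. The codimension bound only gives strand length $\le h_1$ (the case $\bar R^\vee=\kk$, with its full Koszul resolution, shows that a single generator can force strand length equal to the number of variables), so the improvement to $h_s-1$ must use more than dimension counts. This is precisely where the domain hypothesis enters --- the only hypothesis not yet used: because $R$ is a domain, every nonzero linear form is a nonzerodivisor, so the multiplications $R_i\to R_{i+1}$ are injective and a general hyperplane section is well behaved. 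The technical heart will be to convert this into a non-degeneracy (I expect a ``$1$-genericity'' / general-position property) of the pairing $\bar R_1\otimes\bar R_{s-1}\to\bar R_s$ and to feed it into a linear-algebra lemma guaranteeing surjectivity of the contraction maps $\bigwedge^{p+1}V\otimes\bar R_{s-1}\to\bigwedge^pV\otimes\bar R_s$ up to the sharp range $p\le h_1-h_s=\dim V-\dim\bar R_s$. Granting this, the ``in particular'' is immediate: when $h_s\le h_1$ the case $p=0$ gives $\beta^S_{0,s}(R)=0$, and since Cohen--Macaulayness already forces all $S$-module generators of $R$ into degrees $\le s$, $R$ is generated in degrees $\le s-1$.
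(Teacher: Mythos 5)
Your reduction is set up correctly (the computation $\operatorname{pd}_S R = h_1$, $\operatorname{reg}_S R = s$, the identification $\beta^S_{h_1,h_1+s}(R)=h_s$, and the passage to the Koszul cokernel description of the top strand are all sound), and you have correctly located where the domain hypothesis must enter. But the proposal has a genuine gap exactly at its technical heart, and you say so yourself: the statement that the contraction maps $\bigwedge^{p+1}V\otimes\bar R_{s-1}\to\bigwedge^pV\otimes\bar R_s$ are surjective for $p\le \dim V-\dim_\kk\bar R_s$ is left as something you ``expect'' to deduce from a $1$-genericity property, and the whole argument is conditional on ``granting this.'' That missing lemma is not a routine piece of linear algebra: it is precisely (the dual of) Green's vanishing theorem in the Eisenbud--Koh form, which is the one substantial external input of the paper's proof. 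The paper does not prove it either; it cites \cite[Theorem 1.1]{EKsyz}: for a torsion-free module $M$ over $S/\pp$ with $\pp$ prime containing no linear forms and $q$ the initial degree, $\beta^S_{p,p+q}(M)=0$ for $p\ge\dim_\kk M_q$. The paper applies this directly to the canonical module $\omega_R=\Ext_S^{n-d}(R,\omega_S)$, which is torsion-free over the domain $\kk[R_1]=S/\pp$, has initial degree $d-s$ with $\dim_\kk(\omega_R)_{d-s}=h_s$, and whose Betti numbers are the reflection $\beta^S_{p,p+s}(R)=\beta^S_{n-d-p,(n-d-p)+(d-s)}(\omega_R)$ of those of $R$. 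Without either citing this theorem or proving it, your argument does not close.

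A secondary concern: your order of reductions makes the missing step harder to supply, not easier. By passing to the Artinian reduction $\bar R$ first, you discard the geometric/torsion-freeness structure (the Artinian module $\omega_{\bar R}$ is not torsion-free over anything useful, and ``every nonzero linear form is a nonzerodivisor on $R$'' does not directly translate into a usable nondegeneracy statement for the finite-length pairing $\bar R_1\otimes\bar R_{s-1}\to\bar R_s$ without further work). The paper's route -- dualize to $\omega_R$ \emph{before} any Artinian reduction, verify the torsion-freeness hypothesis there, and invoke the Eisenbud--Koh vanishing -- is the clean way to make the domain hypothesis do its job. If you want to keep your Artinian/Koszul-cokernel formulation, you would essentially have to reprove the Eisenbud--Koh theorem along Green's original lines, which is a serious undertaking rather than a finishing touch.
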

Note that Theorem~\ref{thm:main} amounts to the special case $s = 2$ and $p = 0$.
This result and its proof have been inspired by Green's Theorem of the Top Row, \cite[Theorem 4.a.4]{G1}.

For the proof of Theorem~\ref{thm:main_mg}, we are going to use the following version of Green's vanishing theorem:
\begin{theorem}[{\cite[Theorem 1.1]{EKsyz}}]\label{thm:vanish}
	Let $\pp \subset S$ be a homogeneous prime ideal, which does not contain any linear forms.
	Let $M$ be a torsion free finitely generated graded $S/\pp$-module and let $q \in \ZZ$ be the minimal integer such that $M_q \neq 0$.
	Then it holds that
	\[ \beta^S_{p,p+q}(M) = 0 \quad\text{ for }\quad p \geq \dim_\kk M_q. \]
\end{theorem}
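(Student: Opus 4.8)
The plan is to dualize the problem: since $R$ is a graded Cohen--Macaulay $S$-module, the top strand of its minimal resolution is governed by the bottom (initial-degree) strand of its canonical module $\omega_R$, and on $\omega_R$ one can invoke Theorem~\ref{thm:vanish} directly. First I would fix the numerical data. Writing $d = \dim R$ for the Krull dimension, the coefficient of $t$ in the Hilbert series gives $n := \dim_\kk R_1 = d + h_1$, so $S \cong \kk[x_1,\dots,x_n]$. Let $\pp := \ker(S \to R)$; since $R$ is a domain, $\pp$ is a homogeneous prime, and since $S_1 \to R_1$ is an isomorphism, $\pp$ contains no linear forms. Thus $A := S/\pp = \kk[R_1]$ is a standard graded domain contained in $R$, and $R$ is a finitely generated torsion-free $A$-module (a domain that is module-finite over the subdomain $A$). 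As an $S$-module $R$ has annihilator $\pp$, so $\dim_S R = d$; being a Cohen--Macaulay ring it is a Cohen--Macaulay $S$-module, whence $\operatorname{depth}_S R = d$ and, by Auslander--Buchsbaum, $\operatorname{pd}_S R = n - d = h_1 =: c$.

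Next I would bring in duality. The canonical module $\omega_R = \Ext^c_S(R, S(-n))$ is obtained by dualizing the minimal free resolution of $R$, which has length $c$ and whose dual is again a resolution because $R$ is Cohen--Macaulay; this yields the standard identity $\beta^S_{i,k}(\omega_R) = \beta^S_{c-i,\,n-k}(R)$. Substituting $k = n - (p+s)$ and $i = c - p = h_1 - p$ converts the top strand of $R$ into a single strand of $\omega_R$:
\[ \beta^S_{p,\,p+s}(R) = \beta^S_{\,h_1 - p,\ (h_1 - p) + (d - s)}(\omega_R). \]
Here $\omega_R$ is a finitely generated torsion-free $A = S/\pp$-module (it is torsion-free over $R \supseteq A$, being isomorphic to an ideal of the domain $R$). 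Writing $H_R(t)$ for the Hilbert series of $R$, the Hilbert series of $\omega_R$ is $(-1)^d H_R(1/t) = \frac{t^{d-s}(h_s + h_{s-1}t + \cdots + h_0 t^s)}{(1-t)^d}$, so the initial degree of $\omega_R$ is $q := d - s$ and $\dim_\kk (\omega_R)_q = h_s$.

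Finally I would apply Theorem~\ref{thm:vanish} to $M = \omega_R$ and the prime $\pp$, which gives $\beta^S_{i,\,i+q}(\omega_R) = 0$ for all $i \geq \dim_\kk(\omega_R)_q = h_s$. Since $q = d - s$, the right-hand side of the display above is precisely $\beta^S_{i,\,i+q}(\omega_R)$ with $i = h_1 - p$, so it vanishes as soon as $h_1 - p \geq h_s$, that is, for $0 \leq p \leq h_1 - h_s$; this is the first assertion. For the ``in particular'', taking $p = 0$ (which lies in range exactly when $h_s \leq h_1$) yields $\beta^S_{0,s}(R) = 0$, so $R$ has no $S$-module generators in degree $s$; combined with the well-known bound that a Cohen--Macaulay $R$ is generated in degrees $\leq s$, this forces generation in degrees $\leq s-1$.

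The main obstacle I anticipate is the bookkeeping in the duality step: getting the grading shifts in $\omega_R = \Ext^c_S(R, S(-n))$ exactly right so that the index substitution cleanly matches the top strand of $R$ to the initial-degree strand $\beta^S_{i,\,i+q}$ of $\omega_R$, and keeping straight that Theorem~\ref{thm:vanish} is applied to $\omega_R$ as a module over $A = S/\pp$ rather than over $R$, which, being only semi-standard graded, is in general not a quotient of $S$.
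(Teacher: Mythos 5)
There is a fundamental mismatch here: what you have written is not a proof of the statement in question at all. The statement to be proved is Theorem~\ref{thm:vanish} itself --- the vanishing $\beta^S_{p,p+q}(M)=0$ for $p \geq \dim_\kk M_q$, for an arbitrary torsion-free finitely generated graded module $M$ over $S/\pp$ with $\pp$ containing no linear forms. Your argument \emph{invokes} this theorem as a black box (``Finally I would apply Theorem~\ref{thm:vanish} to $M = \omega_R$'') and uses it to derive the inequality $\beta^S_{p,p+s}(R)=0$ for $0 \leq p \leq h_1 - h_s$; that is, you have reconstructed the paper's proof of Theorem~\ref{thm:main_mg}, not a proof of Theorem~\ref{thm:vanish}. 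As an argument for the vanishing theorem this is circular: nothing in your text establishes any Betti number vanishing except by appeal to the very statement under consideration. The dualization step, the identification of the initial degree of $\omega_R$, and the Betti-number symmetry $\beta^S_{i,k}(\omega_R)=\beta^S_{c-i,n-k}(R)$ are all bookkeeping that transfers the problem from one strand to another; they produce no vanishing on their own.

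For the record, the paper does not prove Theorem~\ref{thm:vanish} either: it is quoted verbatim from Eisenbud--Koh \cite[Theorem 1.1]{EKsyz}, and is a strengthening of Green's vanishing theorem. Its proof lives in a completely different toolbox (the analysis of linear strands of minimal free resolutions via Koszul/exterior-algebra methods, together with the torsion-freeness hypothesis to propagate vanishing along the strand), none of which appears in your proposal. Your text would be a correct and essentially faithful rendering of the paper's proof of Theorem~\ref{thm:main_mg} --- the numerics $n = d + h_1$, the torsion-freeness of $\omega_R$ over $\kk[R_1] = S/\pp$, and the duality bookkeeping are all handled accurately --- but for the statement actually posed, it contains no proof, only an application.
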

\noindent In addition, we need the following result:
\begin{theorem}[{\cite[Theorem 6.18]{BG}, \cite[Theorem 4.4.5]{BH}}]\label{prop:dual}
	Let $M$ be a finitely generated graded Cohen-Macaulay module over $S=\kk[x_1, \ldots, x_n]$ with $d = \dim M$.
	Define $M' := \Ext_S^{n - d}(M, \omega_S)$.
	Then
	\begin{enumerate}
		\item $M'$ is also Cohen-Macaulay, $\Ann M' = \Ann M$ 
and $M'' \cong M$.
		\item $\beta^S_{p,q}(M') = \beta^S_{n - d - p, n - q}(M)$.
		\item $H_{M'}(t) = (-1)^{d}H_M(t^{-1})$.  
	\end{enumerate}
\end{theorem}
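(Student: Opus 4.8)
The plan is to reduce all three assertions to a single observation: applying $\operatorname{Hom}_S(-, \omega_S)$ to the minimal graded free resolution of $M$ yields, after reindexing, the minimal graded free resolution of $M'$, after which (a)--(c) become bookkeeping. The homological inputs I would assemble first are standard facts about the regular ring $S$ (with $\mathfrak{m} = (x_1, \ldots, x_n)$ and graded canonical module $\omega_S = S(-n)$), specialized to the Cohen-Macaulay module $M$ of dimension $d$ and codimension $c = n - d$. Since $M$ is Cohen-Macaulay, the Auslander--Buchsbaum formula gives $\operatorname{pd}_S M = n - \operatorname{depth} M = n - d = c$, while $\operatorname{grade} M$ equals $\operatorname{codim} M = c$ because $S$ is Cohen-Macaulay. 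As $\Ext_S^i(M, \omega_S)$ can be nonzero only in the range $\operatorname{grade} M \le i \le \operatorname{pd} M$, it follows that $\Ext_S^i(M, \omega_S) = 0$ for $i \neq c$, while by definition $\Ext_S^c(M, \omega_S) = M'$.

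Concretely, let $F_\bullet \colon 0 \to F_c \to \cdots \to F_0 \to 0$ be the minimal free resolution of $M$, with $F_i = \bigoplus_j S(-j)^{\beta^S_{i,j}(M)}$ and all differentials having entries in $\mathfrak{m}$. Dualizing into $\omega_S$ produces a complex $F_\bullet^\vee$ of free modules whose cohomology is $\Ext_S^\bullet(M, \omega_S)$; by the vanishing above this complex is exact except at the top, where its cokernel is $M'$. Hence $0 \to F_0^\vee \to \cdots \to F_c^\vee \to M' \to 0$ is a free resolution of $M'$, and it is minimal because the dualized differentials are the transposes of the original ones and still have entries in $\mathfrak{m}$. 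Reindexing so that $F_{c-p}^\vee$ sits in homological degree $p$, and using $\operatorname{Hom}_S(S(-j), S(-n)) = S(-(n-j))$, I read off $\beta^S_{p,q}(M') = \beta^S_{c-p, n-q}(M) = \beta^S_{n-d-p, n-q}(M)$, which is (b). For (c) I would pass to the alternating sum of graded Betti numbers: writing the $K$-polynomial $K_N(t) = \sum_{i,j}(-1)^i \beta^S_{i,j}(N)\, t^j$, so that $H_N(t) = K_N(t)/(1-t)^n$, the relation from (b) gives $K_{M'}(t) = (-1)^c t^n K_M(t^{-1})$; substituting $(1-t^{-1})^n = (-1)^n (1-t)^n/t^n$ converts this to $H_{M'}(t) = (-1)^{c+n} H_M(t^{-1}) = (-1)^d H_M(t^{-1})$, since $c + n = 2n - d \equiv d \pmod 2$.

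For (a) I would dualize once more. The minimal resolution of $M'$ is $F_\bullet^\vee$ up to reindexing; dualizing it again returns $F_\bullet$ (as $F_i^{\vee\vee} \cong F_i$ for free modules), whose cohomology is concentrated at the top and equals $M$. This shows at once that $\Ext_S^i(M', \omega_S) = 0$ for $i \neq c$ and that $M'' \cong M$. Functoriality of $\Ext_S^c(-, \omega_S)$ gives $\Ann M \subseteq \Ann M'$, since multiplication by $a \in \Ann M$ induces the zero endomorphism of $M$ and hence acts as $0$ on $M'$; applying the same to $M'$ and using $M'' \cong M$ yields the reverse inclusion, so $\Ann M' = \Ann M$, whence $\operatorname{Supp} M' = \operatorname{Supp} M$ and $\dim M' = d$. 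Finally, the length-$c$ minimal resolution of $M'$ gives $\operatorname{pd}_S M' = c$, so Auslander--Buchsbaum yields $\operatorname{depth} M' = n - c = d = \dim M'$, i.e.\ $M'$ is Cohen-Macaulay. I expect the only genuine subtlety to be organizational rather than deep: establishing the vanishing $\Ext_S^i(M, \omega_S) = 0$ for $i \neq c$ (equivalently, that grade equals codimension for $M$) \emph{before} invoking the dualized complex, and carefully tracking the grading shift coming from $\omega_S = S(-n)$ so that the indices in (b) and the substitution $t \mapsto t^{-1}$ in (c) come out exactly as stated.
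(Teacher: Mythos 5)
This statement is not proved in the paper at all --- it is quoted as a known result from \cite[Theorem 6.18]{BG} and \cite[Theorem 4.4.5]{BH} --- and your argument is precisely the standard textbook proof behind those citations: dualize the minimal free resolution into $\omega_S$, use the grade/projective-dimension vanishing to see that the dual complex is the minimal resolution of $M'$, and read off (1)--(3). Your proof is correct; the only cosmetic point is that $M''$ is by definition $\Ext_S^{n-\dim M'}(M',\omega_S)$, so the identification $\dim M' = d$ (which you correctly deduce from $\Ann M' = \Ann M$, itself resting on $\Ext_S^{n-d}(M',\omega_S)\cong M$) should be stated before the notation $M''$ is used, but all the needed pieces are present in the right logical order.
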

\noindent Here, $\omega_S := S(-n)$ denotes the canonical module of $S$, and $H_M(t)$ denotes the Hilbert series $\sum_{i\in \ZZ} (\dim_\kk M_i) \cdot t^i$ of $M$.

\begin{proof}[Proof of Theorem~\ref{thm:main_mg}]
	Let  $d := \dim R$.
	By \cite[Proposition 3.6.12]{BH}, $\omega_R := \Ext_S^{n-d}(R, \omega_S)$ is a canonical module for $R$.
	Further, note that $\omega_R = R'$ in the notation of Theorem~\ref{prop:dual}.
	By that theorem,
	it holds that
	\[ \beta^S_{p,p+q}(R) = \beta^S_{n-d-p, \, n-(p+q)}(\omega_R) = \beta^S_{n-d-p, \, (n-d-p)+ (d-q)}(\omega_R).\]
	and the Hilbert series of $\omega_R$ is
	\[
		\frac{ h_s  t^{d-s}+  h_{s-1} t^{d-s+1}+ \dotsb +  h_0 t^d}{(1-t)^d}.
	\]
	In particular, $\omega_R$ has no elements in degrees below $d-s$ and we have that $\dim_\kk (\omega_R)_{d-s} = h_s$.
	Now, since $R$ is a domain and $\omega_R$ is a canonical module, it is torsion free 
over $R$ by Theorem~\ref{prop:dual} (1), and thus it satisfies the hypothesis of Theorem~\ref{thm:vanish}.
	Applying that result to $\omega_R$ yields
	that 
	\[
		\beta^S_{p,p+s}(R) = \beta^S_{n-d-p, \, (n-d-p) + (d-s)}(\omega_R) = 0  \quad\text{ if } (n-d) - p \geq h_s
	\]
	Finally, note that $h_1 = n - d$, and the proof is complete.
\end{proof}

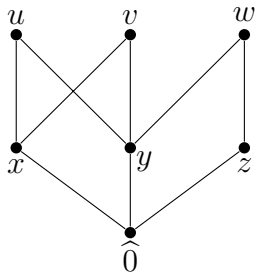
\begin{figure}[ht]
	\begin{tikzpicture}[every node/.style={circle,inner sep=0pt, fill=black,  minimum size=1.5mm}, scale = 1.5]
		\path (0,0)
			node[label=south:$\widehat{0}$] (0) at (0,0.25) {}
			node[label=south:$x$] (x) at (-1,1) {}
			node[label=south east:$y$] (y) at (0,1) {}
			node[label=south:$z$] (z) at (1,1) {}
			node[label=north:$u$] (u) at (-1,2) {}
			node[label=north:$v$] (v) at (0,2) {}
			node[label=north:$w$] (w) at (1,2) {};
	
		\draw (0) -- (z) -- (w) -- (y) -- (u) -- (x) -- (v) -- (y) -- (0) -- (x);
	\end{tikzpicture}
	\caption{The poset $P$ of Remark~\ref{rem:poset}}\label{fig:poset}
\end{figure}

\begin{remark}\label{rem:poset}
Another important example of a semi-standard graded ring appearing in combinatorial commutative algebra  is the \defn{face ring} $A_P$ of a simplicial poset $P$.
	See \cite{Stan2} for details.
	For the simplicial poset $P$ given in Figure \ref{fig:poset}, we have 
	\[A_P \cong \frac{\kk[x,y,z,u,v]}{(xz, uz, vz, uv, xy-u-v)},\]
	where $\deg x =\deg y=\deg z=1$ and $\deg u=\deg v=2$, and $\widehat{0}, w \in P$ correspond to $1, yz \in A_P$, respectively. It is easy to see that $A_P$ is a 2-dimensional  Cohen-Macaulay reduced semi-standard graded ring with the $h$-vector $(1,1,1)$, but it is \defn{not} standard graded.
	It means that Theorem~\ref{thm:main_mg} indeed requires  the assumption that $R$ is a domain.
\end{remark}

\section{Further Discussion on Ehrhart rings}

\subsection{Direct Applications of Theorem \ref{thm:main_mg}}
We now apply Theorem~\ref{thm:main_mg} in the setting of Ehrhart theory.

Let $P \subset \RR^n$ be a lattice polytope.
We write $\MP \subset \ZZ^{n+1}$ for the affine monoid generated by the lattice points in $P \times \set{1} \subset \RR^{n+1}$,
and $\MPh\subset \ZZ^{n+1}$ for its integral closure inside $\ZZ^{n+1}$.
Let $R=\kk[P]$ be the Ehrhart ring of $P$, and $\kk[R_1]$ its subalgebra generated by $R_1$. Then $R$ and $\kk[R_1]$ are the monoid algebras of the monoids $\MPh$ and $\MP$, respectively.   It is well-known that $\MPh$ is generated by elements of degree at most $\min(\deg(P), \dim(P)-1)$ as a module over $\MP$ (cf. \cite[Theorem 2.52]{BG}).
Equivalently, $R=\kk[P]$ is generated by elements at most that degree
as $\Rone$-module, and hence in particular as a $\kk$-algebra. 

Since $\kk[P]$ is always Cohen-Macaulay, Theorem~\ref{thm:main_mg} allows us to improve this bound under an additional assumption as follows. Clearly, this generalizes Corollary~\ref{cor:main}. 
\begin{corollary}\label{thm:ehrhart}
	Let $P \subset \RR^n$ be a lattice polytope of degree $s$ with $h^*$-vector $(1, h^*_1, \dotsc, h^*_s)$.
	If $h^*_s \leq h^*_1$, then $\MPh$ is generated by elements of degrees $\leq s - 1$ as an $\MP$-module.
\end{corollary}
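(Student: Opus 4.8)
The plan is to deduce this directly from Theorem~\ref{thm:main_mg}, by recognizing the Ehrhart ring as an instance of the rings treated there and then translating the module-theoretic conclusion back into the language of monoids.

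First I would fix an algebraically closed field $\kk$ and set $R = \kk[P]$; since the monoids $\MP, \MPh$ and hence the statement to be proved do not depend on $\kk$, this choice is harmless. As recalled in the introduction, $R$ is a normal domain, it is Cohen-Macaulay by Hochster's theorem, and it is semi-standard graded with $R_0 = \kk$; moreover its $h$-vector coincides with the $h^*$-vector $(1, \hs_1, \dotsc, \hs_s)$ of $P$, the index $s$ of the last nonzero entry being exactly the degree of $P$. Thus $R$ satisfies all hypotheses of Theorem~\ref{thm:main_mg}, and the assumption $\hs_s \leq \hs_1$ is precisely the inequality $h_s \leq h_1$ occurring there. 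Applying that theorem (its ``in particular'' clause, i.e.\ the case $p=0$), I obtain that $R$ is generated as a module over $S := \Sym_\kk R_1$ by elements of degree $\leq s-1$.

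Next I would pass from $S$ to $\Rone$. The $S$-module structure on $R$ factors through the canonical surjection $S \twoheadrightarrow \Rone$ onto the subalgebra generated by $R_1$, and this surjection carries the irrelevant maximal ideal of $S$ onto that of $\Rone$; consequently $\mathfrak{m}_S R = \mathfrak{m}_{\Rone} R$, the minimal graded generating sets of $R$ over $S$ and over $\Rone$ agree, and $R$ is generated over $\Rone$ by elements of degree $\leq s-1$. Since $R$ and $\Rone$ are the monoid algebras of $\MPh$ and $\MP$, it remains only to read this off at the level of monoids.

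The one genuinely non-formal point is this last translation, which I would handle by multigraded Nakayama. Both $R$ and its $\Rone$-action are graded by $\ZZ^{n+1}$, so the quotient $R / \mathfrak{m}_{\Rone} R$ is multigraded with $\kk$-basis the monomials $x^a$ for $a \in \MPh \setminus \bigl((\MP \setminus \set{0}) + \MPh\bigr)$; this index set is exactly the unique minimal generating set of $\MPh$ as an $\MP$-module, and each $x^a$ is homogeneous of degree equal to the last coordinate of $a$, which is the monoid degree of $a$. Hence a minimal generating set of $R$ over $\Rone$ may be taken to consist of these monomials, and ``$R$ is generated over $\Rone$ in degrees $\leq s-1$'' is equivalent to ``$\MPh$ is generated over $\MP$ in degrees $\leq s-1$'', finishing the proof. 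I expect no serious obstacle: the only care needed is to verify the multigraded Nakayama identification of the minimal generators and to match the grading of the monomial $x^a$ with the monoid degree of $a$.
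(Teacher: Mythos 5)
Your proposal is correct and matches the paper's (implicit) argument: the corollary is stated there as a direct consequence of Theorem~\ref{thm:main_mg} applied to $R=\kk[P]$, using that $\kk[P]$ is a semi-standard graded Cohen--Macaulay domain whose $h$-vector is the $h^*$-vector of $P$. The extra details you supply --- reducing to an algebraically closed field, passing from $S$ to $\Rone$, and the multigraded Nakayama identification of minimal monoid generators --- are exactly the routine translation the paper leaves to the reader.
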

Let $P^\circ$ be the relative interior of $P$. The lattice points in $P^\circ$ are closely related to the canonical module of $\kk[P]$ (c.f. \cite[Theorem~6.3.5 (b)]{BH}). 
In general, it holds that $\hs_{\dim P} \leq \hs_1$ (because $\hs_{\dim P} = \#(P^\circ \cap \ZZ^n) \leq \#(P \cap \ZZ^n) - (\dim P +1) = \hs_1$),  therefore this corollary extends the bound mentioned above.

If $P$ is IDP, then $R = \kk[P]$ is the quotient ring of $S = \Sym_\kk R_1$ by a certain prime ideal $I \subseteq S$, which is called the \defn{toric ideal} of $P$.
It is known that $I$ is generated by polynomials of degree at most $\deg(P) + 1\leq \dim(P) + 1$ (Sturmfels, cf. \cite[Corollary 7.27]{BG}), and again we can improve these bounds by one:
\begin{corollary}
	Let $P$ be an IDP lattice polytope and let $I \subset S$ be its toric ideal.
	\begin{enumerate}
	\item If $h^*_s \leq h^*_1 - 1$, then $I$ is generated in degrees $\leq \deg(P)$.
	\item If $P$ is not a clean simplex, then $I$ is generated in degrees $\leq \dim(P)$.
	\end{enumerate}
\end{corollary}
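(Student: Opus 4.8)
The plan is to deduce both statements from Theorem~\ref{thm:main_mg} together with the Sturmfels generation bound recalled above, via the standard identification that for the standard graded quotient $R = \kk[P] = S/I$ the integer $\beta^S_{1,j}(R)$ equals the number of minimal generators of the toric ideal $I$ in degree $j$; thus ``$I$ is generated in degrees $\leq k$'' is equivalent to $\beta^S_{1,j}(R) = 0$ for all $j > k$. Since $P$ is IDP, $R$ is standard graded, and it is a Cohen--Macaulay (indeed normal) domain, so Theorem~\ref{thm:main_mg} applies with $s = \deg(P)$ and $h$-vector equal to the $h^*$-vector of $P$.

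For part (1) I would invoke Theorem~\ref{thm:main_mg} with $p = 1$. The hypothesis $\hs_s \leq \hs_1 - 1$ is exactly the inequality $1 \leq \hs_1 - \hs_s$ required by that theorem, which then gives $\beta^S_{1,\, s+1}(R) = 0$; that is, $I$ has no minimal generator in degree $\deg(P) + 1$. Combined with the Sturmfels bound that $I$ is generated in degrees $\leq \deg(P) + 1$, all generators are forced into degrees $\leq \deg(P)$, as claimed.

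For part (2) I would argue by cases using $\deg(P) \leq \dim(P)$. When $\deg(P) < \dim(P)$, the Sturmfels bound already yields generation in degrees $\leq \deg(P) + 1 \leq \dim(P)$, so there is nothing to do. When $\deg(P) = \dim(P)$, the top entry of the $h^*$-vector counts interior lattice points, and the inequality noted after Corollary~\ref{thm:ehrhart} gives $\hs_s = \hs_{\dim P} = \#(P^\circ \cap \ZZ^n) \leq \hs_1$. If this inequality is strict, then $\hs_s \leq \hs_1 - 1$ and part (1) applies, yielding generation in degrees $\leq \deg(P) = \dim(P)$. The only remaining possibility is the equality $\hs_{\dim P} = \hs_1$, which unwinds to $\#(\partial P \cap \ZZ^n) = \dim(P) + 1$; since every vertex is a boundary lattice point and a $\dim(P)$-dimensional polytope has at least $\dim(P) + 1$ vertices, this forces $P$ to be a simplex whose only boundary lattice points are its vertices, i.e.\ a clean simplex. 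As this case is excluded by hypothesis, the bound $\leq \dim(P)$ holds in every remaining case.

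The hard part will be the extremal case of part (2): I must check that the equality $\hs_{\dim P} = \hs_1$ is precisely the clean-simplex configuration, so that excluding clean simplices removes exactly the obstruction. This amounts to unwinding the two combinatorial descriptions of $\hs_1$ and of $\hs_{\dim P}$ and verifying that the minimal boundary lattice point count $\dim(P) + 1$ is attained only by simplices all of whose facets are empty. Everything else reduces to a direct application of the already-established Theorem~\ref{thm:main_mg} and of the cited Sturmfels bound.
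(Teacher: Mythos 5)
Your proposal is correct and follows essentially the same route as the paper: part (1) is Theorem~\ref{thm:main_mg} with $p=1$ combined with the Sturmfels bound, and part (2) reduces to the equality case $\hs_{\dim P}=\hs_1$, which you correctly identify with the clean-simplex condition (the paper asserts this equivalence without the unwinding you supply). No gaps.
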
\label{Cor 3.2}
Recall that a \defn{clean simplex} is a lattice simplex where the only lattice points on its boundary are the vertices.
\begin{proof}
\begin{enumerate}
	\item Apply Theorem~\ref{thm:main_mg} to $R = \kk[P]$ with $p = 1$.
	\item If $I$ has a generator in degree $\dim(P) + 1$, then by the result mentioned above it holds that $\deg(P) = \dim(P)$.
	Moreover, the hypothesis of part (1) needs to be violated, hence it holds that $\hs_{\dim(P)} \geq \hs_1$.
	It follows that $\hs_{\dim(P)} = \hs_1$, which is equivalent to $P$ being a clean simplex.\qedhere
\end{enumerate}
\end{proof}

\begin{remark}\label{Rem 3.3}
(1)  Let $P \subset \RR^2$ be a lattice polygon. Then we have $\deg P \le 2$ and $P$ is IDP. Moreover, Koelman \cite{koelman} showed that the toric ideal of $P$ is generated by quadrics if and only if $h^*_2 < h^*_1$. 
Hence Corollary~\ref{Cor 3.2} is an extension of one implication of his result.
In particular, the result of \cite{koelman} shows that the bound $h^*_2 < h^*_1$ is sharp.

(2)  In  \cite{Schenck}, H. Schenck also applied the theory of M. Green to the study of Ehrhart rings $\kk[P]$. However, the focus of \cite{Schenck} is different from ours. More precisely, he always assumed that $\kk[P]$ is standard graded (i.e., $P$ is IDP), and treated the case the toric ideal is generated by quadrics.    

(3)  By an argument similar to the above, in the situation of Theorem~\ref{thm:main}, if $h_2 < h_1$, then $R$ is standard graded, and its ``defining ideal'' is generated by quadrics. 
If further $\operatorname{char}(\kk)=0$, $R$ is \defn{Koszul} (actually, \defn{absolutely Koszul}) by \cite[Theorem~5.2 (1)]{CINR}. In other words, we can slightly weaken the assumption of 
 \cite[Theorem~5.2 (1)]{CINR} to that ``$R$ is semi-standard graded''.  
So if $\operatorname{char}(\kk)=0$ and a lattice polytope $P$ has the $h^*$-vector 
$(h_0^*, h_1^*, h_2^*)$ with $h_2^* < h_1^*$, then $\kk[P]$ is (absolutely) Koszul.

\end{remark}

\subsection{Combinatorial proofs}
Corollary~\ref{cor:main} is a purely combinatorial statement, and hence one might hope for a combinatorial proof.
As a first step, we prove a weak variant of Corollary~\ref{cor:main} which admits an elementary proof.

We remind the reader that a lattice polytope $P \subset \RR^n$ is called \defn{spanning} \cite{HKN1}, if $(P \times \set{1}) \cap \ZZ^{n+1}$ generates the lattice $\ZZ^{n+1}$.
Every IDP polytope is spanning, but the converse is far from being true. 
Algebraically, for the Ehrhart ring $R=\kk[P]$, $P$ is spanning if and only if the field of fractions of $R$ coincides with that of  $\Rone$.

\begin{proposition}\label{prop:spanning}
	Let $P \subset \RR^n$ be a $d$-dimensional lattice polytope with $\hs$-vector $(\hs_0, \hs_1, \dotsc)$.
	If $\hs_1 + \hs_d \geq \sum_{i=2}^{d-1} \hs_i$, then $P$ is spanning.
\end{proposition}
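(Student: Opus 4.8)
The plan is to argue by contradiction. Suppose $P$ is not spanning. Algebraically this says that $G := \mathrm{gp}(\MP)$ is a proper subgroup of the saturated lattice $L := \mathrm{gp}(\MPh)$ (of rank $d+1$, with $\MPh = \cone \cap L$), of some finite index $m \geq 2$. I would pick a prime $p \mid m$ and a surjection $\psi \colon L \to \ZZ/p$ vanishing on $G$. The first key observation is combinatorial: every lattice point of $P$ — interior ones included — yields a generator $(v,1)$ of $\MP \subseteq G$, so \emph{all} level-$1$ points of $\MPh$ lie in $\ker\psi$. Grading $\MPh$ by the value of $\psi$ then splits the Ehrhart ring as $R = \bigoplus_{j \in \ZZ/p} R^{(j)}$, a direct sum of modules over $R^{(0)} = \kk[\cone \cap \ker\psi]$. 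This $R^{(0)}$ is again a normal, hence Cohen--Macaulay, Ehrhart ring (that of $P$ with respect to the index-$p$ sublattice $\ker\psi$), and by the observation it has the same degree-$0$ and degree-$1$ parts as $R$.

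Writing $N_j(t) := (1-t)^{d+1} H_{R^{(j)}}(t)$ for the numerators, I have $\sum_j N_j(t) = \sum_i \hs_i t^i$, and $N_j(1) = e(R^{(j)}) = \Vol(P)/p$ for every $j$ (each summand has rank one over $R^{(0)}$). The heart of the proof is to show that for $j \neq 0$ the polynomial $N_j$ has nonnegative coefficients and is supported in degrees $2, \dotsc, d-1$. Nonnegativity follows because $R^{(j)}$ is a direct summand of the Cohen--Macaulay ring $R$ as an $R^{(0)}$-module, hence maximal Cohen--Macaulay; after a linear Noether normalization $N_j$ is the Hilbert series of an Artinian reduction. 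That $N_j$ starts in degree $\geq 2$ is immediate, since $R^{(j)}$ has no elements of degree $0$ or $1$. For the upper bound I would use reciprocity: by Danilov--Stanley the canonical module of the conic module $R^{(j)} = \kk[\cone \cap (\ker\psi + w_j)]$ is the module of relative-interior lattice points of the same coset, and for $j \neq 0$ that coset meets $\mathrm{relint}(\cone)$ only from level $2$ on (again because the level-$1$ interior points of $P$ all lie in $\ker\psi$). So $\omega_{R^{(j)}}$ starts in degree $\geq 2$, which forces $\deg N_j \leq d-1$.

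Granting this, the contradiction is a short coefficient count. From $\sum_j N_j = \sum_i \hs_i t^i$ and the support of the $N_j$ with $j \neq 0$ one reads off $[t^0]N_0 = 1$, $[t^1]N_0 = \hs_1$, and $[t^d]N_0 = \hs_d$, while $[t^i]N_0 \geq 0$ for all $i$ and $\deg N_0 \leq d$ (as $R^{(0)}$ is a normal Ehrhart ring). Hence $\Vol(P)/p = N_0(1) \geq 1 + \hs_1 + \hs_d$. On the other hand, for $2 \leq i \leq d-1$ we have $\hs_i \geq \sum_{j \neq 0}[t^i]N_j$, and summing yields
\[
	\sum_{i=2}^{d-1}\hs_i \;\geq\; \sum_{j \neq 0} N_j(1) \;=\; (p-1)\,\Vol(P)/p \;\geq\; (p-1)(1 + \hs_1 + \hs_d) \;\geq\; 1 + \hs_1 + \hs_d,
\]
using $p \geq 2$. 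This strictly exceeds $\hs_1 + \hs_d$, contradicting the hypothesis; therefore $P$ is spanning.

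The main obstacle, I expect, is precisely the support statement for the $N_j$ with $j \neq 0$, and within it the upper degree bound $\deg N_j \leq d-1$. The lower bound and nonnegativity are formal consequences of the splitting $R = \bigoplus_j R^{(j)}$ together with maximal Cohen--Macaulayness of the summands. The upper bound is where one genuinely needs the interplay between the canonical module (equivalently, Ehrhart reciprocity applied coset-by-coset) and the combinatorial fact that \emph{all} lattice points of $P$ already sit in the sublattice generated by $P \cap \ZZ^n$; this is what rules out level-$1$ interior points in the nonzero classes and thereby pins down the top degrees of the $N_j$.
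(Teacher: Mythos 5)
Your argument is correct, but it reaches the conclusion by a more self-contained route than the paper does. The paper's proof is essentially four lines of arithmetic: it passes to $\Pt$ (the polytope $P$ read in the index-$q$ sublattice generated by its lattice points), imports from \cite[Section~3.2]{HKN1} the facts $\Vol(P)=q\Vol(\Pt)$, $\hs_1=\ths_1$, $\hs_d=\ths_d$ and $\hs_i\ge\ths_i$, and deduces $\sum_{i=2}^{d-1}\hs_i\ge (q-1)(1+\hs_1+\hs_d)\ge 1+\hs_1+\hs_d$ --- exactly the inequality in your final display. What you do differently is to re-derive those imported facts from scratch: the coset decomposition $R=\bigoplus_j R^{(j)}$ over $R^{(0)}$, freeness of each summand over a linear Noether normalization (nonnegativity of the $N_j$), the vanishing of $R^{(j)}$ in degrees $0$ and $1$ for $j\ne 0$ (whence $[t^1]N_0=\hs_1$ and the lower support bound), and the reciprocity argument for $\deg N_j\le d-1$ (whence $[t^d]N_0=\hs_d$) are precisely the content of the cited results of \cite{HKN1}. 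You correctly single out the top-degree bound as the one genuinely delicate step; the only detail to adjust there is that dualizing into the canonical module sends the coset $j$ to the interior points of the \emph{negative} coset $-j$, not ``the same coset'' --- harmless here, since $-j\ne 0$ whenever $j\ne 0$, but worth stating precisely. Your reduction to a prime divisor $p$ of the index rather than the full index $q$ is an inessential variation, as only $p\ge 2$ is used. In short: same final inequality, but your version buys independence from \cite{HKN1} at the cost of reproving its Section~3.2 inside the proof.
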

\begin{proof}
	We show the contrapositive.
	Assume that $P$ is not spanning, and let $q >1$ be the index of the lattice generated by the lattice points in $P$.
	Further, let $\Pt$ be the polytope $P$ considered in the lattice generated by its lattice points (see \cite{HKN1}).
	We write $\ths$ for the $\hs$-vector of $\Pt$.
	It holds that
	\begin{equation}\label{eq:sp}
	\sum_{i=0}^d \hs_i = \Vol(P) = q \Vol(\Pt) = q \sum_{i=0}^d \ths_i.
	\end{equation}
	Moreover, it holds that $\hs_1=\ths_1, \hs_d=\ths_d$ and $\hs_i \geq \ths_i$ for $1 \leq i \leq d$ (see Section 3.2 of \cite{HKN1}).
	Now \eqref{eq:sp} implies that
	\[ 0 \leq q \sum_{i=2}^{d-1} \ths_i =  \sum_{i=2}^{d-1} \hs_i - (q-1)(1+\hs_1+\hs_d) \leq  \sum_{i=2}^{d-1} \hs_i - (1+\hs_1+\hs_d)\]
	and thus $\hs_1 + \hs_d < \sum_{i=2}^{d-1} \hs_i$.
\end{proof}

In the next corollary, (1) is just a weak version of Corollary~\ref{cor:main}, but (2) and (3) are new.

\begin{corollary} With the above notation, the following hold. 
	\begin{enumerate}
		\item If $\deg P = 2$ and $\hs_1 \geq \hs_2$, then $P$ is spanning.
		\item If $\dim(P) = 3$ and $\hs_1 + \hs_3 \geq \hs_2$, then $P$ is spanning.
		\item If $\dim(P) = 4$, $\deg(P) \geq 3$, and $\hs_1 + \hs_4 \geq \hs_2 + \hs_3$, then $P$ is spanning. In this case, it holds that $\hs_1 = \hs_2 = \hs_3 = \hs_4$.
	\end{enumerate}
\end{corollary}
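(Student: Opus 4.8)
The plan is to obtain all three parts directly from Proposition~\ref{prop:spanning} by specializing its hypothesis to the dimension in question, and then to extract the extra equality statement in (3) from two classical inequalities on $\hs$-vectors. The guiding observation is that for small $d$ the sum $\sum_{i=2}^{d-1}\hs_i$ collapses to a single term (or is empty), so that the hypothesis of Proposition~\ref{prop:spanning}, namely $\hs_1 + \hs_d \geq \sum_{i=2}^{d-1}\hs_i$, becomes exactly the inequality assumed in each case.

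For (1), I would split according to $\dim P$. If $\dim P = 2$, the sum $\sum_{i=2}^{d-1}\hs_i$ is empty, so the hypothesis of Proposition~\ref{prop:spanning} holds trivially and $P$ is spanning. If $\dim P = d \geq 3$, then $\deg P = 2$ forces $\hs_i = 0$ for $i \geq 3$; in particular $\hs_d = 0$ and $\sum_{i=2}^{d-1}\hs_i = \hs_2$, so the condition of the proposition reads $\hs_1 \geq \hs_2$, which is the assumption. For (2), with $d = 3$ the sum is just $\hs_2$, and the hypothesis of the proposition is literally $\hs_1 + \hs_3 \geq \hs_2$; for (3), with $d = 4$ it is $\hs_1 + \hs_4 \geq \hs_2 + \hs_3$. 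In both cases the asserted inequality is exactly the hypothesis of Proposition~\ref{prop:spanning}, so spanning follows at once.

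For the final equality claim in (3), I would combine the assumed inequality $\hs_1 + \hs_4 \geq \hs_2 + \hs_3$ with two standard facts: the interior-point bound $\hs_{\dim P} \leq \hs_1$ already recorded above (which gives $\hs_4 \leq \hs_1$ since $\dim P = 4$), and Hibi's lower bound theorem, which asserts $\hs_1 \leq \hs_i$ for $1 \leq i \leq \deg(P) - 1$ (here $\hs_{\deg P} \neq 0$ by definition of the degree). First I would rule out $\deg P = 3$: then $\hs_4 = 0$, so the hypothesis becomes $\hs_1 \geq \hs_2 + \hs_3$, while Hibi gives $\hs_1 \leq \hs_2$, forcing $\hs_3 \leq 0$ and contradicting $\hs_3 \neq 0$. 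Hence $\deg P = 4$, and Hibi's theorem applies with $i = 2, 3$ to give $\hs_1 \leq \hs_2$ and $\hs_1 \leq \hs_3$. Chaining the inequalities
\[ 2\hs_1 \;\leq\; \hs_2 + \hs_3 \;\leq\; \hs_1 + \hs_4 \;\leq\; 2\hs_1 \]
(the middle step being the hypothesis, the last the interior-point bound) forces every inequality to be an equality, whence $\hs_2 = \hs_3 = \hs_1$ and $\hs_4 = \hs_1$.

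I do not expect a serious obstacle: the spanning conclusions are bookkeeping specializations of Proposition~\ref{prop:spanning}. The only point requiring genuine care—and the closest thing to a hard part—is identifying the correct auxiliary inequalities for (3) and verifying that the assumption $\deg P \geq 3$ is precisely what is needed, both to invoke Hibi's theorem with the indices $i = 2, 3$ and to discard the degenerate case $\deg P = 3$.
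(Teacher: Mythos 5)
Your argument is essentially the paper's: parts (1)--(3) are read off from Proposition~\ref{prop:spanning} by noting that for the relevant dimensions the sum $\sum_{i=2}^{d-1}\hs_i$ collapses to the term(s) appearing in each hypothesis, and the equality claim in (3) is extracted by sandwiching $2\hs_1 \le \hs_2+\hs_3 \le \hs_1+\hs_4 \le 2\hs_1$. Your case analysis in (1) (treating $\dim P = 2$ and $\dim P \ge 3$ separately, using $\hs_i = 0$ for $i > \deg P$) is a correct and slightly more explicit version of what the paper leaves as ``immediate.''

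The one point that needs repair is the justification of $\hs_1 \le \hs_i$ for $1 \le i \le \deg(P)-1$. This is \emph{not} Hibi's lower bound theorem: Hibi's theorem requires $P$ to have an interior lattice point (i.e.\ $\deg P = \dim P$) and gives $\hs_1 \le \hs_i$ for $1 \le i \le \dim P - 1$. In the very step where you rule out $\deg P = 3$ you are in the situation $\deg P = 3 < 4 = \dim P$, where $P$ has no interior lattice points and Hibi's theorem does not apply; moreover, the inequality $\hs_1 \le \hs_i$ for $i < \deg P$ is false for general lattice polytopes. The correct tool is \cite[Theorem 1.4]{UPP}, which establishes exactly this inequality for \emph{spanning} polytopes --- and it is applicable here precisely because you have just shown $P$ is spanning via Proposition~\ref{prop:spanning}. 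This is the route the paper takes, and with that substitution (which costs nothing, since spanning is already in hand) your proof goes through verbatim.
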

\begin{proof}
	Only the very last statement is not immediate from Proposition~\ref{prop:spanning}.
	If $\dim(P) = 4$, then $\hs_4 \leq \hs_1$.
	By assumption and Proposition~\ref{prop:spanning}, $P$ is spanning, and hence by
	\cite[Theorem 1.4]{UPP} it holds that $\hs_1 \leq \hs_i$ for $1 \leq i < \deg(P)$.
	As $\deg(P) \geq 3$ it holds that $\hs_1 \leq \hs_2$ and $\hs_3 > 0$, and thus $\hs_1 < \hs_2 + \hs_3$.
	It follows that $\hs_4 \neq 0$, so $\deg(P) = 4$.
	Hence we have that $\hs_4 \leq \hs_1 \leq \hs_2$ and $\hs_1 \leq \hs_3$. These inequalities, together with $\hs_1 + \hs_4 \geq \hs_2 + \hs_3$, imply that $\hs_1 = \hs_2 = \hs_3 = \hs_4$.
\end{proof}

Unfortunately, these are the only cases where Proposition~\ref{prop:spanning} can be applied, due to the following observation:
\begin{proposition}
	If $d:=\dim P \geq 5$ and $\deg P \geq 3$, then $\hs_1 + \hs_d < \sum_{i=2}^{d-1} \hs_i$.
\end{proposition}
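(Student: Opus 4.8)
Write $d := \dim P$ and $s := \deg P$, so that $\hs_s \neq 0$, $\hs_i = 0$ for $i > s$, and by hypothesis $d \geq 5$, $s \geq 3$. The plan is to prove the strict inequality by splitting into the two cases $s < d$ and $s = d$. Since $P$ is \emph{not} assumed to be spanning, the inequality $\hs_1 \le \hs_i$ coming from \cite[Theorem 1.4]{UPP} is unavailable, and the whole point is to work only with unconditional facts about $\hs$-vectors. The two external inputs I would invoke are: Stanley's inequalities $\hs_0 + \hs_1 + \dotsb + \hs_i \le \hs_s + \hs_{s-1} + \dotsb + \hs_{s-i}$, valid for $0 \le i \le \lfloor s/2 \rfloor$ (in particular, for $i=1$, the relation $1 + \hs_1 \le \hs_{s-1} + \hs_s$); and Hibi's lower bound theorem, which gives $\hs_1 \le \hs_i$ for $1 \le i \le d-1$ \emph{under the extra assumption} $\hs_d \neq 0$. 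I would also use the elementary bound $\hs_d \le \hs_1$ recorded after Corollary~\ref{thm:ehrhart}.

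Suppose first that $s < d$. Then $\hs_d = 0$ and $\sum_{i=2}^{d-1} \hs_i = \sum_{i=2}^{s} \hs_i$, so the target reduces to $\hs_1 < \sum_{i=2}^{s}\hs_i$. Here I would apply Stanley's inequality at $i=1$ (in range since $s \ge 3 \ge 2$), giving $1 + \hs_1 = \hs_0 + \hs_1 \le \hs_{s-1} + \hs_s$. Because $s \ge 3$, the indices $s-1$ and $s$ are distinct and both lie in $\{2, \dotsc, s\}$, so $\hs_{s-1} + \hs_s \le \sum_{i=2}^{s}\hs_i$ by nonnegativity of the remaining terms. Chaining these yields $\hs_1 < 1 + \hs_1 \le \sum_{i=2}^{s}\hs_i$, with strictness coming for free from the $\hs_0 = 1$ term. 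I expect this case to be the main obstacle, precisely because Hibi's theorem does not apply when $\hs_d = 0$; the crucial observation is that the $+1$ supplied by Stanley's inequality is exactly what is needed to beat $\hs_1$.

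Now suppose $s = d$, so $\hs_d \ge 1$. I would first argue $\hs_1 \ge 1$: if $\hs_1 = 0$, then $P$ has only $d+1$ lattice points, forcing $P$ to be a simplex whose lattice points are exactly its vertices, hence with no interior lattice point and $\hs_d = 0$, contradicting $s = d$. Since $\hs_d \neq 0$, Hibi's lower bound theorem applies and gives $\hs_i \ge \hs_1$ for $2 \le i \le d-1$, whence $\sum_{i=2}^{d-1}\hs_i \ge (d-2)\hs_1 \ge 3\hs_1$ using $d \ge 5$. On the other hand, $\hs_1 + \hs_d \le 2\hs_1$ by the bound $\hs_d \le \hs_1$. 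Combining, $\hs_1 + \hs_d \le 2\hs_1 < 3\hs_1 \le \sum_{i=2}^{d-1}\hs_i$, where the middle strict inequality uses $\hs_1 \ge 1$. This settles both cases and hence the proposition. The only delicate points to watch are the index bookkeeping when $\deg P < \dim P$ (ensuring $\hs_d = 0$ and that $s-1, s \ge 2$) and the exclusion of the degenerate simplex in the case $s = d$.
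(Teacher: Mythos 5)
Your proof is correct, but it takes a genuinely different route from the paper's. The paper argues by contradiction: if the inequality failed, then $P$ would be spanning by Proposition~\ref{prop:spanning}, and \cite[Theorem 1.4]{UPP} would give $\hs_1 \le \hs_i$ for $1 \le i < \deg(P)$, from which a contradiction is extracted by splitting on $\deg(P) \le 4$ versus $\deg(P) \ge 5$. You instead prove the inequality directly and unconditionally, splitting on $\deg(P) < \dim(P)$ versus $\deg(P) = \dim(P)$, and you replace the spanning machinery by two classical inputs: Stanley's inequality $1 + \hs_1 \le \hs_{s-1} + \hs_s$ from \cite{stanleyDomain} (which appears in the bibliography but is not used elsewhere in the paper) and Hibi's Lower Bound Theorem for polytopes with an interior lattice point (which the paper does not cite). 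Both of your cases check out: in the first, $s-1 \ge 2$ is guaranteed by $\deg P \ge 3$ and the ``$+1$'' coming from $\hs_0$ supplies the strictness; in the second, $\hs_1 \ge 1$ actually follows even more quickly from $1 \le \hs_d \le \hs_1$ than from your simplex argument. What the paper's approach buys is thematic economy---the proposition exists precisely to delimit the applicability of Proposition~\ref{prop:spanning}, and its proof reuses that proposition as the engine; what yours buys is independence from the spanning machinery and from the recent result \cite{UPP}, at the cost of importing Hibi's theorem as an additional external reference.
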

\begin{proof}
	Assume the contrary that the claimed inequality does not hold. Then $P$ is spanning by Proposition~\ref{prop:spanning}.
	Now, by \cite[Theorem 1.4]{UPP} it holds that $\hs_1 \leq \hs_i$ for $1 \leq i < \deg(P)$.
	
	We distinguish two cases.
	First, assume that $\deg(P) \leq 4$.
	Then it holds that $\hs_d = 0$, $\hs_1 \leq \hs_2$ and $\hs_3 > 0$ (since $\deg(P) \geq 3$), thus we have that
	\[ 
	\hs_1 + \hs_d = \hs_1 < \hs_2 + \hs_3 \leq \sum_{i=2}^{d-1} \hs_i.
	\]
	Next, assume that $\deg(P) \geq 5$.
	Then $\hs_d \leq \hs_1$ and, as before, $\hs_1 \leq \hs_2$, $\hs_1 \leq \hs_3$ and $\hs_4 > 0$.
	Therefore
	\[
	\hs_1 + \hs_d \leq \hs_2 + \hs_3 < \hs_2 + \hs_3 + \hs_4 \leq \sum_{i=2}^{d-1} \hs_i.
	\]
	In both cases, we obtain a contradiction.
\end{proof}

\subsection{About Polytopes of degree \texorpdfstring{$2$}{2}}
One can combine our Corollary~\ref{cor:main} with the results of \cite{HY18} to obtain the following web of implications for lattice polytopes of degree $2$:
\begin{theorem}
	Let $P \subset \RR^n$ be a lattice polytope of degree 2 with $\hs$-vector $(1, \hs_1, \hs_2)$, and let  $\Pt$ denote the polytope $P$ considered as a lattice polytope inside the lattice generated by the lattice points in $P$.
Then the following implications hold:
	\[
	\begin{tikzcd}[column sep=small, arrows={Rightarrow}]
			\hs_1 \geq \hs_2 \ar[rd] \ar[rr]
			&& \hs_1+1 \notdiv \hs_2 \ar[rr] &&
			\deg\Pt \neq 1 \ar[rr,Leftrightarrow] && \text{level}\\
			& \text{IDP} \ar[rr]	&&	\text{spanning} \ar[ru]	&&&
	\end{tikzcd}
	\]
\end{theorem}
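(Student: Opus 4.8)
The plan is to verify each arrow of the web separately, invoking Corollary~\ref{cor:main} for the single substantive implication into IDP and \cite{HY18} for the characterization of levelness, while the remaining arrows reduce to elementary arithmetic with the $\hs$-vector and a volume count. Throughout I would use that $\deg P = 2$ forces $\hs_2 \geq 1$, and that $\ths_1 = \hs_1$ together with $\hs_i \geq \ths_i \geq 0$ (established in the proof of Proposition~\ref{prop:spanning}), so that $\deg \Pt \leq 2$.

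First I would dispatch the two horizontal arrows of the top row. For $\hs_1 \geq \hs_2 \Rightarrow \hs_1 + 1 \notdiv \hs_2$: if $(\hs_1+1) \mid \hs_2$, then since $\hs_2 \geq 1$ we would have $\hs_2 \geq \hs_1 + 1 > \hs_1$, contradicting $\hs_1 \geq \hs_2$. For $\hs_1 + 1 \notdiv \hs_2 \Rightarrow \deg \Pt \neq 1$ I would argue the contrapositive by a volume count. If $\deg \Pt = 1$, then $\ths_i = 0$ for $i \geq 2$, so $\Vol(\Pt) = 1 + \ths_1 = 1 + \hs_1$. Writing $q$ for the lattice index, the identity $\Vol(P) = q\,\Vol(\Pt)$ reads $1 + \hs_1 + \hs_2 = q(1 + \hs_1)$, whence $\hs_2 = (q-1)(1+\hs_1)$ and thus $(\hs_1 + 1) \mid \hs_2$.

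Next I would handle the bottom route and the two diagonals. The diagonal $\hs_1 \geq \hs_2 \Rightarrow \text{IDP}$ is exactly Corollary~\ref{cor:main}, and the implication $\text{IDP} \Rightarrow \text{spanning}$ is the standard observation recalled just before Proposition~\ref{prop:spanning}. For $\text{spanning} \Rightarrow \deg \Pt \neq 1$, note that $P$ spanning means the lattice index is $q = 1$, so $\Pt = P$ and hence $\deg \Pt = \deg P = 2 \neq 1$. This closes every arrow except the biconditional on the right.

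The remaining equivalence $\deg \Pt \neq 1 \Leftrightarrow \text{level}$ is the one genuine external input, and I would cite it directly from \cite{HY18}: for a degree-$2$ lattice polytope, the Ehrhart ring $\kk[P]$---a semi-standard graded Cohen--Macaulay domain with $h$-vector $(1, \hs_1, \hs_2)$---fails to be level precisely when $\Pt$ has degree $1$. The main obstacle is therefore not computational but a matter of matching the algebraic non-levelness criterion of \cite{HY18} to the geometric condition $\deg \Pt = 1$ (in particular checking that the hypotheses there apply to $\kk[P]$); once that dictionary is in place, the elementary steps above assemble into the full web of implications.
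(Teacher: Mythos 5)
All the elementary arrows in your proposal match the paper's proof: the top-row implications via the divisibility/volume count $\Vol(P)=q\Vol(\Pt)$, the diagonal via Corollary~\ref{cor:main}, and the bottom route via IDP $\Rightarrow$ spanning $\Rightarrow$ $\Pt=P$. The gap is the biconditional $\deg\Pt\neq 1 \Leftrightarrow$ level, which you propose to ``cite directly from \cite{HY18}.'' No such single statement exists there; this equivalence is precisely the new content of this part of the paper, and both directions require an argument. For the forward direction the paper splits into cases: if $\deg\Pt=0$ then $\hs_1=0$ and levelness follows from \cite[Lemma 2.1]{HY18}, while if $\deg\Pt=2$ one checks $\deg\Rt=1+\hs_1+\ths_2>1+\hs_1$ and invokes \cite[Proposition 3.4]{HY18}. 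These are two different sufficient criteria, neither of which is stated as an equivalence with $\deg\Pt\neq 1$.

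The converse direction, level $\Rightarrow \deg\Pt\neq 1$, is where your proposal would actually fail as written: \cite{HY18} does not characterize non-levelness by $\deg\Pt=1$, so there is no ``dictionary'' to match up. The paper instead proves a new result (Lemma~\ref{lem:level}): if $\kk[P]$ is level then $\deg\Pt\neq\deg P-1$ for \emph{any} lattice polytope. Its proof uses the combinatorial levelness criterion \cite[Proposition 4.3]{HY18} --- every interior lattice point of $kP$ with $k\geq c(P)$ decomposes as $\beta+\gamma$ with $\beta\in c(P)P^\circ\cap\ZZ^n$ and $\gamma\in(k-c(P))P\cap\ZZ^n$ --- applied to a point $\alpha\in c(\Pt)P^\circ\cap L$, where $L$ is the lattice spanned by the lattice points of $P$. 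The assumption $\deg\Pt=\deg P-1$ forces $c(\Pt)=c(P)+1$, so $\beta\notin L$ (since $c(P)\Pt$ has no interior lattice points) while $\gamma\in P\cap\ZZ^n\subset L$, contradicting $\beta+\gamma=\alpha\in L$. Without this argument, or something equivalent, the rightmost arrow of the diagram is unproved.
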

Here, we say that a lattice polytope $P$ is \defn{level} if its Ehrhart ring $R=\kk[P]$ is level, that is, its canonical module $\omega_R$ is generated in a single degree as an $R$-module. The levelness of $P$ is a combinatorial property of the monoid $\MPh$ (c.f. \cite[Proposition 4.3]{HY18}), and does not depend on the base field $\kk$.  

\begin{proof}
\begin{description}[font=\normalfont\enquote]
	\item[$\hs_1 \geq \hs_2\!\implies\!\hs_1+1 \notdiv \hs_2$] This is elementary.
	
	\item[$1 + h^*_1 \notdiv h^*_2\!\implies\!\deg(\Pt) \neq 1$]
		We show the contrapositive.
		Assume that $\deg(\Pt) = 1$.
		Denote the $h^*$-vector of $\Pt$ by $\tilde{h}^*$.
		The volume of $\Pt$ divides the volume of $P$, since the latter is normalized with respect to a finer lattice.
		Thus we have that 
		\[ (1 + \tilde{h}^*_1 + \tilde{h}^*_2) \mid (1 + h^*_1 + h^*_2) \]
		On the other hand, we have that $\tilde{h}^*_1 = h^*_1$ and by assumption, $\tilde{h}^*_2 = 0$.
		It follows that $(1+h^*_1) \mid h^*_2$.

	\item[$\hs_1 \geq \hs_2\!\implies$IDP] This is Corollary~\ref{cor:main}.
	\item[IDP$\implies$spanning] This is well-known (and elementary), and its does not need the assumption $\deg P \leq 2$.
	\item[spanning$\implies\! \deg(\Pt) \neq 1$]
		$\deg \Pt= \deg P = 2 \neq 1$.
	\item[$\deg(\Pt) \neq 1\!\implies$level]
		Under this hypothesis, the degree of $\Pt$ is either $0$ or $2$. We distinguish those cases:
		\begin{description}[font=\normalfont]
			\item[$\deg(\Pt) = 0$] In this case $h^*_1 = 0$, so the claim follows form \cite[Lemma 2.1]{HY18}.
			\item[$\deg(\Pt) = 2$] Let $R$ and $\Rt$ denote the Ehrhart rings of $P$ and $\Pt$.
			Then $\deg\Rt = 1 + h^*_1 + \tilde{h}^*_2 > 1 + h^*_1$ by assumption, and thus $R$ is level by \cite[Proposition 3.4]{HY18}.
		\end{description}
		\item[level$\implies\! \deg(\Pt) \neq 1$] This follows from the following more general Lemma~\ref{lem:level} below.\qedhere
\end{description}
\end{proof}

\begin{lemma}\label{lem:level}
	Let $P \subset \RR^n$ be a lattice polytope. 
	If $\kk[P]$ is level, then $\deg(\Pt) \neq \deg P - 1$.
\end{lemma}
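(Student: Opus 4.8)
The plan is to argue the contrapositive at the level of the canonical module $\omega_R$ of $R = \kk[P]$, using its description by interior lattice points. Recall (from \cite[Theorem~6.3.5 (b)]{BH} together with the Hilbert series computation in the proof of Theorem~\ref{thm:main_mg}) that $\omega_R$ is the $R$-module spanned by the points of $\ZZ^{n+1}$ in the relative interior $C^\circ$ of the cone $C = \cone(\set{1}\times P)$, graded by height, and that its lowest nonzero degree is the codegree $c := \dim P + 1 - \deg P$, with $\dim_\kk(\omega_R)_c = \hs_{\deg P}$. Since the bottom degree of a graded module bounded below always carries minimal generators, $P$ being level is equivalent to $\omega_R = R\cdot(\omega_R)_c$, i.e.\ to the statement that every interior lattice point is obtained from an interior point at height $c$ by adding lattice points of $P$. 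So I would assume this generation property and aim to show $\deg\Pt \neq \deg P - 1$.

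Next I would bring in the sublattice $\tilde L \subseteq \ZZ^{n+1}$ generated by $\MP$, so that $\Pt$ is $P$ regarded inside $\tilde L$ and the canonical module of $\kk[\Pt]$ is spanned by the points of $C^\circ \cap \tilde L$, with bottom degree equal to the codegree $\dim\Pt + 1 - \deg\Pt$ of $\Pt$. The hypothesis $\deg\Pt = \deg P - 1$ translates precisely into saying that this bottom degree equals $c+1$: at height $c$ the interior $C^\circ$ contains points of $\ZZ^{n+1}$ but none of $\tilde L$, whereas at height $c+1$ there is a point $w \in C^\circ \cap \tilde L$ (as $\ths_{\deg\Pt} > 0$). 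I would record these two facts as the technical heart of the set-up.

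The contradiction then comes from a one-line lattice-containment argument. Since $\tilde L \subseteq \ZZ^{n+1}$, the point $w$ lies in $\omega_R$, so by the assumed levelness I may write $w = z + p$ with $z \in C^\circ \cap \ZZ^{n+1}$ at height $c$ and $p$ a lattice point of $P$ at height $1$. But $p \in \MP \subseteq \tilde L$ and $w \in \tilde L$, forcing $z = w - p \in C^\circ \cap \tilde L$ — an interior $\tilde L$-point at height $c$, contradicting that the lowest such height is $c+1$. Hence $\omega_R$ cannot be generated in the single degree $c$, so $P$ is not level, which is the desired contrapositive.

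I expect the main obstacle to lie in the bookkeeping of the first two paragraphs rather than in the final argument: one must set up the dictionary between levelness, the interior-point description of $\omega_R$, and the pair of lattices $\ZZ^{n+1} \supseteq \tilde L$ carefully, and in particular verify that the bottom degrees of the two canonical modules sit at the consecutive heights $c$ and $c+1$ exactly when $\deg\Pt = \deg P - 1$. Once that is in place, the key observation — that a lattice point of $P$ always lies in $\tilde L$, hence cannot carry an interior point out of $\tilde L$ — concludes the proof at once.
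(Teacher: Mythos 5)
Your argument is correct and is essentially the paper's own proof: the paper likewise converts levelness into the statement that every $\alpha \in c(\Pt)P^\circ \cap \ZZ^n$ splits as $\beta + \gamma$ with $\beta \in c(P)P^\circ \cap \ZZ^n$ and $\gamma \in P \cap \ZZ^n$ (citing \cite[Proposition~4.3]{HY18} instead of unpacking the canonical module via interior points of the cone, as you do), and then derives the identical contradiction from $\beta = \alpha - \gamma$ being forced into the sublattice generated by the lattice points of $P$, where no interior point can live at height $c(P)$.
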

\begin{proof}
	Let $c(P) := \min\set{\ell \in \ZZ_{>0} \colon \ell P^\circ \cap \ZZ^n \neq \emptyset}$ (sometimes this is called the \defn{codegree} of $P$).
	It is well-known that $\deg(P) = \dim(P) + 1 - c(P)$.
	
	We are going to use \cite[Proposition 4.3]{HY18}, which we recall for convenience: If $P$ is level, then for any $k \geq c(P)$ and $\alpha \in kP^\circ \cap \ZZ^n$, there exist a $\beta \in c(P)P^\circ \cap \ZZ^n$ and $\gamma \in (k - c(P))P \cap \ZZ^n$ such that
	\[ \alpha = \beta +  \gamma. \]
	
	Now, assume that $\deg \Pt = \deg P - 1$, and note that this implies $c(\Pt) = c(P) + 1$. 
	Let $L \subset \ZZ^n$ be the sublattice spanned by 
the lattice points in $P$. 
	As $P \neq \Pt$, this is a proper sublattice of $\ZZ^n$.
	Choose $\alpha \in c(\Pt)P^\circ \cap L$.
	Then, if $P$ were level, there would exist $\beta$ and $\gamma$ as above.
	As $\beta \in c(P)P^\circ \cap \ZZ^n$, it follows that $\beta \notin L$ (because $c(P)\Pt$ has no interior lattice points).
	Further, $\gamma$ lies in $(c(\Pt) - c(P))P = P$ and thus $\gamma \in L$.
	But this contradicts $\beta + \gamma = \alpha \in L$.
\end{proof}

We provide some examples to show that all the implications are strict and that there are no other implications. In each example, the claimed properties can  conveniently be verified using \texttt{normaliz} \cite{Normaliz}.

\begin{example}[$\hs_1+1\notdiv\hs_2 \notimplies$ spanning, IDP, $\hs_1 \geq \hs_2$]\label{ex:sp}
	Consider the $4$-polytope $P$ with vertices
	\[
	\cvec{0\\0\\0\\0},
	\cvec{1\\1\\0\\0},
	\cvec{1\\0\\1\\0},
	\cvec{1\\0\\0\\1},
	\cvec{0\\1\\1\\0},
	\cvec{0\\1\\0\\1} \text{ and }
	\cvec{0\\0\\1\\1}.
	\]
	Its $\hs$-vector is $(1,2,5)$, so it satisfies $\hs_1+1\notdiv\hs_2$, but $\hs_1 \ngeq \hs_2$.
	To see that it is not spanning (and thus not IDP), consider the vector
	\[ v := \cvec{1\\1\\1\\0} = \frac{1}{2}\left( 
	\cvec{0\\0\\0\\0} + 
	\cvec{1\\1\\0\\0} + 
	\cvec{1\\0\\1\\0} + 
	\cvec{0\\1\\1\\0} \right)
	\]
	It lies in $2P \cap \ZZ^4$, 
	but the sum of its coordinates is odd, while the coordinate sum of each vertex of $P$ is even.
	Hence $v$ cannot lie in the lattice spanned by them.
\end{example}

\begin{example}[IDP and spanning $\notimplies \hs_1+1\notdiv\hs_2, \hs_1 \geq \hs_2$]\label{eq:idp}
	Let $P$ be the $3$-simplex with vertices 
	\[
	\cvec{0\\0\\0},
	\cvec{1\\0\\0},
	\cvec{0\\4\\0} \text{ and }
	\cvec{1\\0\\3}.
	\]
	It is IDP, and its $\hs$-vector is $(1,5,6)$, so $\hs_1+1\mid\hs_2$ and $\hs_1 \ngeq \hs_2$
\end{example}

\begin{example}[spanning $\notimplies$ IDP]\label{eq:va}
	It is well-known that this implication does not hold in general.
	For an example with degree $2$, see \cite[Exercise~2.24]{BG}.
	This is a very-ample (and thus spanning) $3$-polytope which is not IDP.
	Its $\hs$-vector is $(1,4,5)$, so it has degree $2$.
\end{example}

\begin{example}[$\deg\Pt \neq 1 \notimplies \hs_1+1\notdiv \hs_2$, spanning]
	Let $P$ be the polytope of Example~\ref{eq:reeves} with $\hs$-vector $(1,0,1)$.
	In this case $\Pt$ is a unit simplex and thus $\deg \Pt = 0 \neq 1$.
	However, $P$ is not spanning and it holds that $\hs_1+1\mid \hs_2$.
\end{example}

\begin{example}[$\hs_1+1\notdiv \hs_2$ and IDP $\notimplies \hs_1 \geq \hs_2$]
	Let $P$ be the $3$-simplex with vertices 
	\[
	\cvec{0\\0\\0},
	\cvec{1\\0\\0},
	\cvec{0\\4\\0} \text{ and }
	\cvec{1\\0\\4}.
	\]
	Its $\hs$-vector is $(1,6,9)$, so it satisfies $\hs_1+1\notdiv \hs_2$, but $\hs_1 \ngeq \hs_2$.
	Moreover, it is IDP.
\end{example}


\end{document}